 \newtheorem{thm}{Theorem}[section]
 \theoremstyle{definition}
 \newtheorem{defn}[thm]{Definition}
 \theoremstyle{remark}
 \newtheorem{rem}[thm]{Remark}
 \numberwithin{equation}{section}
 \newcommand\ric{Ric}
 \newcommand\cur{Rm}
\begin{document}

\title[]
{A no expanding breather theorem for noncompact Ricci flows}

\author{Liang Cheng and Yongjia Zhang}

\dedicatory{}
\date{}


\keywords{expanding breathers, gradient expanding Ricci solitons, no breather theorem, noncompact Ricci flows}

\thanks{Liang Cheng's Research partially supported by Natural Science Foundation of Hubei 2019CFB511
}

\address{School of Mathematics and Statistics $\&$ Hubei Key Laboratory of Mathematical Sciences, Central  China Normal University, Wuhan, 430079, P.R.China
}

\email{chengliang@mail.ccnu.edu.cn}

\address{ School of Mathematics, University of Minnesota, Twin Cities, MN, 55414, USA}
\email{ zhan7298@umn.edu}

\maketitle

\begin{abstract}
	In this note we show that, under certain curvature positivity conditions (the weak $\operatorname{PIC}-2$ condition or the nonnegative bisectional curvature condition), a complete and noncompact expanding breather of the Ricci flow is also an expanding gradient Ricci soliton. This is the first no expanding breather theorem in the noncompact category.
\end{abstract}

\section{Introduction}

Perelman \cite{P1} proved the no shrinking, steady, and expanding breather theorems on compact manifolds, by applying the monotonicity formulas of his $\mathcal{W}$-functional, $\mathcal{F}$-functional, and normalized $\mathcal{F}$-functional, respectively: these types of breathers must also be gradient Ricci solitons of the corresponding types, which evolve by self-diffeomorphism and scaling. In fact, in the compact category, the steady and expanding gradient Ricci solitons must also be Einstein manifolds. Let us first of all recall the definitions of breathers and solitons.
\begin{defn}
Let $(M,g(t))$ be a complete Ricci flow. If there exist two time instances $t_1<t_2$, a constant $\alpha>0$, and a self-diffeomorphism $\phi: M\rightarrow M$, such that
\begin{eqnarray*}
g(t_1)=\alpha \phi^*g(t_2),
\end{eqnarray*}
then $(M,g(t))$ is called a breather. If $\alpha=1$, $\alpha>1$, or $\alpha<1$, then the breather is called steady, shrinking, or expanding, respectively.
\end{defn}
\begin{defn}
	A Ricci soliton is a tuple $(M, g, X, \lambda)$, where $(M, g)$ is a complete Riemannian manifold, $X$ is a smooth vector field on $M$, and $\lambda>0$ is a constant, satisfying
	\begin{eqnarray*} Rc +\frac{1}{2}\mathcal{L}_Xg =\frac{\lambda}{2}g.
	\end{eqnarray*}
	If $\lambda=0$, $\lambda>0$, or $\lambda<0$, then the soliton is called steady, shrinking, or expanding, respectively. The soliton is called \emph{complete} if the vector field $X$ is complete. If there exists a smooth function $f$ on $M$ such that $X=\nabla f$, then $(M, g, f, \lambda)$ is called a gradient Ricci soliton.
\end{defn}

A complete Ricci soliton $(M,g,X,\lambda)$ always generates a canonical form, that is, a Ricci flow $g(t)=\tau(t)\phi_t^*g$ which moves by self-diffeomorphism and scaling, where $ \tau(t)=\lambda t+1$ and  $\frac{\partial}{\partial t}\phi_t(x)=\frac{1}{ \tau(t)}X(\phi_t(x))$. As indicated by Perelman \cite{P1}, if one views a Ricci flow as an orbit in the space $\operatorname{Met}(M)/ \operatorname{Diff}(M)$, where $\operatorname{Met}(M)$ is the space of all smooth Riemannian metrics and $ \operatorname{Diff}(M)$ stands for the group of self-diffeomorphisms and scalings, then a breather is a periodic orbit and a soliton is a static one. Therefore, the no breather theorem is tantamount to saying that the periodic orbits must also be static.	

Perelman's proofs of the no breather theorems require the existence of minimizers for the $\mathcal{W}$-functional, the $\mathcal{F}$-functional, and the normalized $\mathcal{F}$-functional, respectively. When the manifold is compact, such existence results are straightforward applications of variational problems. A natural question to ask is, under what condition can the no breather theorems be established for noncompact manifolds, and how to carry our the proof. One may certainly attempt to find the minimizers of these functionals, and this is possible under certain geometric conditions. For results of this type, the reader may refer to \cite{RV} and \cite{Zhang1}. We remark here that, in the noncomapct category, this approach is almost impossible for the no steady or expanding breather theorems. The reason is that the $\mathcal{F}$-functional or the normalized $\mathcal{F}$-functional are generally not finite on noncompact steady or expanding gradient Ricci solitons, respectively, when these functionals are evaluated using the potential functions of the corresponding solitons. (One may think of a Bryant soliton for example.)

Another approach was initiated by the result of Lu-Zheng \cite{LZ}, where they constructed a Type I ancient solution using a shrinking breather, and proved, under certain geometric conditions, that the backward blow-down limit of this ancient solution must be the breather itself, which must also be a shrinking gradient Ricci soliton by Naber \cite{N}. This method was refined, and the conclusion improved, by the second author \cite{Zh}, where the condition for the no shrinking breather theorem is reduced to bounded curvature alone. The authors \cite{CZhang} recently further reduced the condition for the no shrinking breather theorem to a lower bound of the Ricci curvature alone.

In this paper, we continue our study in \cite{CZhang} and apply our method to noncompact expanding breathers. Recall that Feldman-Ilmanen-Ni \cite{FIN} established some forward monotonicity formulas for the Ricci flow as the dual version of Perelman's \cite{P1}, whose equalities are fulfilled on expanding gradient Ricci solitons. We will be implementing Feldman-Ilmanen-Ni's forward reduced geometry in this paper in the same way as we have applied  Perelman's reduced geometry in \cite{CZhang}. However, since the forward reduced geometry does not behave as nicely as Perelman's reduced geometry (The reason, intuitively speaking, is this, that on steady or expanding solitons, as it is in the case of a shrinking soliton, the forward reduced volumes should coincide with the $\mathcal{F}$-functional or the normalized $\mathcal{F}$-functional, respectively, evaluated using the corresponding potential functions, whereas the latter two are generally infinite in the noncompact case), we will need to impose some strong curvature conditions.

\begin{thm}\label{main}
Let $(M,g(t))$ be a complete and noncompact expanding breather of the Ricci flow. Assume $g(t)$ has bounded curvature on each time-slice and either one of the following is true.
\begin{enumerate}[(1)]
\item $g(t)$ satisfies the weak $\operatorname{PIC}-2$ condition.
\item $(M,g(t))$ is K\"ahler with nonnegative bisectional curvature.	
\end{enumerate}
Then $(M,g(t))$ is the canonical form of an expanding gradient Ricci soliton.
\end{thm}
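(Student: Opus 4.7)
My plan is to adapt the strategy of \cite{CZhang} to the expanding setting, replacing Perelman's backward reduced geometry by Feldman--Ilmanen--Ni's \cite{FIN} forward reduced geometry. The target is to identify a functional $\tilde{V}^{+}(\tau)$---the forward reduced volume---that is monotone along the Ricci flow, whose equality case in the monotonicity formula is exactly the canonical form of an expanding gradient Ricci soliton, and then to use the breather relation to force $\tilde{V}^{+}$ to be constant, yielding the soliton identity on $(M,g(t))$.

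First I would use the breather relation $g(t_1)=\alpha\phi^{\ast}g(t_2)$ with $\alpha<1$, together with the scaling and diffeomorphism invariance of the Ricci flow, to extend $(M,g(t))$ iteratively to an immortal solution defined on $(T_{\ast},\infty)$ for some finite $T_{\ast}$; after a time translation one may assume $T_{\ast}=0$, the natural time-origin for expanding-soliton geometry. The same iteration would produce a sequence of space-time identifications relating $g(T_k)$ to $g(T_{k+1})$ by scaling and pullback by diffeomorphism, with $T_k\to\infty$. Next, under the curvature positivity hypothesis---which is preserved by the flow in both cases and which, together with the bounded-curvature assumption, yields nonnegative Ricci curvature on each slice---I would set up FIN's forward reduced length $\ell^{+}$, the $\mathcal{L}^{+}$-exponential map, and the forward reduced volume $\tilde{V}^{+}(\tau)$ based at an appropriate space-time point, and verify that $\tilde{V}^{+}(\tau)$ is finite for every $\tau>0$, monotone in $\tau$, and rigid in the equality case.

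The breather periodicity should then be exploited through the invariance of $\tilde{V}^{+}$ under parabolic rescaling composed with diffeomorphism pullback: the iteration of the first step identifies $\tilde{V}^{+}(T_{k+1})=\tilde{V}^{+}(T_k)$ for every $k$, which combined with monotonicity in $\tau$ would force $\tilde{V}^{+}$ to be constant on $(0,\infty)$; the equality case of FIN's monotonicity formula would then identify $(M,g(t))$ as the canonical form of an expanding gradient Ricci soliton.

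The principal difficulty I anticipate lies in the second step. Unlike Perelman's reduced volume, FIN's forward reduced volume is generically infinite on noncompact expanding solitons, and the rigorous derivation of its monotonicity requires controlling the $\mathcal{L}^{+}$-cut locus, justifying cutoff integration by parts, and handling the asymptotic behavior of the $\mathcal{L}^{+}$-distance at infinity. The curvature hypotheses enter precisely here: they propagate under the flow and yield Bishop--Gromov volume control and Laplacian comparison via nonnegative Ricci curvature, which should be enough to tame the noncompact integrals defining $\tilde{V}^{+}$ and to force rigidity in the equality case. The bulk of the technical work should therefore lie in these noncompact extensions of FIN's reduced-geometry machinery, rather than in the outer breather-periodicity argument.
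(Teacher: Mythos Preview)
Your plan has two genuine gaps.

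The central one is the periodicity claim $\tilde{V}^{+}(T_{k+1})=\tilde{V}^{+}(T_k)$. Unlike Perelman's $\mu$- or $\lambda$-functionals, which depend only on a single metric after minimizing over the auxiliary function, any forward reduced volume is anchored at a fixed space-time base point $(p_0,0)$ and records the entire flow history on $[0,T_k]$. The breather self-map identifies $g(T_k)$ with $g(T_{k+1})$ up to scaling and diffeomorphism, but it does not carry $\ell_+(\cdot,T_k)$ to $\ell_+(\cdot,T_{k+1})$: the minimizing $\mathcal{L}_+$-geodesics to the two slices traverse non-isometric portions of the immortal flow, and the base point itself is moved by the map. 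Nor can you arrange an exactly self-similar extension on $(0,\infty)$ with a regular base at $t=0$, since the Type III bound forces $|Rm|\sim C/t$ there. This is precisely why none of \cite{LZ}, \cite{Zh}, \cite{CZhang} argue by periodicity of the reduced volume; they pass to a blow-down limit, and so does the paper. One splices the breather into a Type III immortal flow, constructs a moving sequence $x_j=\phi^{-(j+1)}(p_0)$ with $\ell_+(x_j,t_j)\leq A$ (via an explicit test curve), extracts $\ell_+^j\to \ell_+^\infty$ along the rescalings $(M,t_j^{-1}g(t_jt),x_j)\to(M,g_\infty,p_0)$, where $g_\infty$ is the original breather up to scale and shift, and then uses the monotonicity to force the equality $\partial_t\ell_+^\infty-\Delta\ell_+^\infty+|\nabla\ell_+^\infty|^2+R_\infty+\tfrac{n}{2t}=0$ in the limit; from this one reads off $K^\infty\equiv 0$ and the expander equation $Rc_\infty-\nabla^2\ell_+^\infty=-\tfrac{1}{2t}g_\infty$.

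The second gap is the choice of monotone quantity and the role of the curvature hypotheses. Feldman--Ilmanen--Ni's $\theta_+(t)=(4\pi t)^{-n/2}\int_M e^{+\ell_+}\,d\mu_t$ is already infinite on flat $\mathbb{R}^n$, so no amount of curvature positivity will ``tame'' it. The paper uses instead the dual quantity $\tilde{\theta}_+(t)=(4\pi t)^{-n/2}\int_M e^{-\ell_+}\,d\mu_t$, whose finiteness comes from the quadratic lower bound on $\ell_+$ in Theorem~\ref{TypeIII_Estimate}(2) (this is where $Rc\geq 0$ and the Type III bound are used), but whose monotonicity is \emph{not} automatic: it requires Hamilton's trace Harnack $H(X)\geq 0$ (Theorem~\ref{Monotonicity}, from \cite{LNi}). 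The weak PIC2\,/\,nonnegative bisectional curvature hypotheses enter exactly to supply the Harnack via \cite{brendle}, \cite{cao}, \cite{RH2} --- not to control volumes via Bishop--Gromov as you suggest.
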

\begin{rem}
	Our proof depends heavily on Hamilton's Harnack estimate (\cite{brendle}, \cite{cao}, and \cite{RH2}), and this is why we assumed the above curvature conditions. The reader may easily verify that, if Hamilton's trace Harnack is assumed to be valid, then a nonnegative Ricci curvature assumption is sufficient for the proof. Lott \cite{L} gave an example of a complete but nongradient expanding soliton on noncompact manifold (see page 635 in therein). This soliton has bounded curvature but does not have nonnegative Ricci curvature. Therefore, Theorem \ref{main} cannot be proved in general without any curvature positivity condition.
\end{rem}

Before we conclude our introduction, a word is to be said about our method. Similar to \cite{CZhang}, we constructed a Type III immortal solution starting from the given expanding breather and considered the monotonicity of the quantity
\begin{eqnarray*}
\tilde{\theta}_+(t)=\frac{1}{(4\pi t)^{\frac{n}{2}}}\int_M e^{-\ell_+}d\mu_t,
\end{eqnarray*}
where $\ell_+$ is the forward reduced distance constructed in \cite{FIN}. Note that the monotonicity of this quantity relies on Hamilton's Harnack, as proved in Theorem 4.3 of \cite{LNi}. However, the forward reduced volume defined in \cite{FIN}
\begin{eqnarray*}
\theta_+(t)=\frac{1}{(4\pi t)^{\frac{n}{2}}}\int_M e^{\ell_+}d\mu_t,
\end{eqnarray*}
being automatically decreasing along all Ricci flows on compact manifolds, is yet generally infinite on noncompact manifolds.

The organization of the paper is as follows. In section 2, we review basic  $\mathcal{L}_+$-geometry introduced by Feldman-Ilmanen-Ni \cite{FIN}. In section 3, we estimate  the $\ell_+$-distance on Type III immortal Ricci flows. In section 4, we prove the main theorem. 

\section{Preliminaries}

Let $g(t)$ be a metric evolving by the Ricci flow equation on $M\times[0,T]$. We always assume that either $M$ is compact or $g(t)$ has bounded curvature at each time. Fixing a point $x$, Feldman-Ilmanen-Ni \cite{FIN} defined the following dual version of Perelman's reduced distance, called the \emph{forward reduced distance function}.
\begin{align}\label{l_+length}
\ell_+(y,t)=\frac{1}{2\sqrt{t}}\inf\limits_{ \gamma}\int^t_0\sqrt{\eta}\left(R(\gamma(\eta),\eta)+|\gamma'(\eta)|_{g(\eta)}^2\right)d\eta,
\end{align}
where $(y,t)\in M\times(0,T]$, and the infimum is taken among all piecewise smooth curves $\gamma:[0,t]\rightarrow M$ satisfying $\gamma(0)=x$ and $\gamma(t)=y$. $(x,0)$ is called the \emph{base point} of $\ell_+$. We remark that since the minimizing curve in (\ref{l_+length}) also satisfies an $\mathcal{L}_+$-geodesic equation, whose form is very similar to that of an $\mathcal{L}$-geodesic equation, one may easily modify the arguments in, say, Chapter 7 in \cite{CCGGI}, to verify that $\ell_+$ is locally Lipschitz under our assumption. We then summarize some equations and inequalities satisfied by $\ell_+$. The reader may note their similarity to the case of Perelman's $\mathcal{L}$-geometry.

\begin{thm}[Corollary 2.1 in \cite{FIN}]\label{variation2}
	The $\ell_+$ function satisfies the following equalities for almost every $(y,t)\in M\times(0,T]$ 	\begin{align}
	\frac{\partial \ell_+}{\partial t}=R-\frac{\ell_+}{t}-\frac{K}{2t^{
			\frac{3}{2}}},\label{eq_l_1}\\
	|\nabla \ell_+|^2=\frac{\ell_+}{t}-R+\frac{K}{t^{
			\frac{3}{2}}},\label{eq_l_2}
	\end{align}
	Moreover,  $\ell_+$ satisfies the following inequalities in the barrier sense or in the sense of distribution.
	\begin{align}
	\Delta \ell_+ \leq R+\frac{n}{2t}-\frac{K}{2t^{
			\frac{3}{2}}},\label{eq_l_3}\\
	\frac{\partial \ell_+}{\partial t}+\Delta \ell_+ +|\nabla
	\ell_+|^2-R-\frac{n}{2t}\leq 0,\label{eq_1_4}\\
	2\Delta \ell_+ +|\nabla \ell_+|^2-R-\frac{\ell_++n}{t}\leq 0,\label{eq_1_5}
	\end{align}
	where 
	\begin{align}\label{K}
	K=\int^t_0 \eta^{\frac{3}{2}}H(X)d\eta,
	\end{align}
	 $X$ is the velocity of the minimizing $\mathcal{L}_+$-geodesic connecting $(x,0)$ and $(y,t)$, and $$H(X)=\frac{\partial
		R}{\partial t}+2<\nabla R,X>+2Rc(X,X)+\frac{R}{t}$$ is Hamilton's trace Harnack. Furthermore, $\nabla \ell_+(y,t)=X(t)$ whenever the minimizing $\mathcal{L}_+$-geodesic connecting $(x,0)$ and $(y,t)$ is unique.
\end{thm}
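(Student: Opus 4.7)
The plan is to follow the standard blueprint for Perelman's $\mathcal{L}$-geometry, pushed through for the forward-time functional, and then to collect the five assertions out of it. Write $L_+(y,t):=2\sqrt{t}\,\ell_+(y,t)=\inf_\gamma\int_0^t\sqrt{\eta}(R+|\gamma'|^2)\,d\eta$. The first variation with compactly supported variations (fixing both endpoints) yields the $\mathcal{L}_+$-geodesic equation, schematically $\nabla_X X+\tfrac{1}{2\eta}X+2\ric(X,\cdot)^\sharp-\tfrac{1}{2}\nabla R=0$, which I would derive by the usual integration-by-parts computation while keeping careful track of the $\partial_t g=-2\ric$ term. A simple substitution $\eta=s^2$ smooths out the singular factor at $\eta=0$ and lets me appeal to standard ODE existence/uniqueness to guarantee that minimizers exist and are smooth away from the base point; the bounded-curvature hypothesis is used exactly here to rule out loss of minimizers to infinity.

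Next I would compute $\nabla\ell_+$ by varying the endpoint: letting $\gamma_s$ be a family of minimizers from $x$ to $y_s$, the first-variation formula with free endpoint gives $\langle\nabla L_+,\partial_s y_s\rangle = 2\sqrt{t}\langle X(t),\partial_s y_s\rangle$, hence $\nabla\ell_+=X(t)$ (where a minimizer is unique). Combined with a direct evaluation of the integrand at $\eta=t$ this gives the identity $|\nabla\ell_+|^2=\ell_+/t-R+K/t^{3/2}$ once one defines $K$ by (\ref{K}); the $K$ term is what one gets after rewriting $|X(t)|^2$ by integrating $\tfrac{d}{d\eta}(\eta^{3/2}|X|^2)$ along the geodesic and using the geodesic equation together with $\partial_t g=-2\ric$. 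To obtain the $\partial_t\ell_+$ formula I would differentiate $L_+(\gamma(t),t)$ along a chosen minimizer in $t$, compare with $\partial_t L_+ + \langle\nabla L_+,X(t)\rangle$, and subtract to isolate $\partial_t L_+$; then divide by $2\sqrt{t}$.

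For the Laplacian upper bound (\ref{eq_l_3}), I would use the second-variation / index-lemma trick of Perelman. Pick an orthonormal basis $\{e_i\}$ at $y$, parallel-transport it (with respect to $g(t)$) back along the $\mathcal{L}_+$-geodesic after rescaling by a natural factor $\sqrt{\eta/t}$ so that the test vector fields vanish at $\eta=0$, and plug them into the index form. The minimizing property makes the second variation nonnegative. Summing over $i$ and collecting terms produces, after a fairly long algebraic manipulation using the second Bianchi identity and the evolution $\partial_t R=\Delta R+2|\ric|^2$, exactly the trace inequality $\Delta\ell_+\leq R+\tfrac{n}{2t}-\tfrac{K}{2t^{3/2}}$. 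The hard part is the bookkeeping of the $K$-term: several pieces combine via an identity $\tfrac{d}{d\eta}(\eta^{3/2}H(X))$ together with the Harnack-type quantity $H(X)$, and getting the signs and $\eta$-weights to line up is the step most likely to fail on the first attempt. Once (\ref{eq_l_1}), (\ref{eq_l_2}), (\ref{eq_l_3}) are in hand, (\ref{eq_1_4}) follows by adding (\ref{eq_l_1}), (\ref{eq_l_2}), (\ref{eq_l_3}) (the $K$'s cancel) and (\ref{eq_1_5}) follows from $2\times$(\ref{eq_l_3}) plus (\ref{eq_l_2}). The regularity claim (everything holding a.e.\ in the classical sense and globally in the barrier/distributional sense) is a consequence of local Lipschitz continuity of $\ell_+$ plus Calabi's trick applied to upper barriers coming from competitor minimizing geodesics.
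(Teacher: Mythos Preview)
The paper does not prove this theorem at all: it is stated as a citation of Corollary~2.1 in \cite{FIN} (Feldman--Ilmanen--Ni) and is simply quoted as background. Your sketch, which adapts Perelman's $\mathcal{L}$-geometry computations to the forward functional, is exactly the route taken in \cite{FIN} to obtain these identities, and your algebra for deducing (\ref{eq_1_4}) and (\ref{eq_1_5}) from (\ref{eq_l_1})--(\ref{eq_l_3}) is correct. So there is no discrepancy to report: you have supplied the standard argument behind a result the paper merely cites.
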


\begin{thm}[Theorem 4.3 in \cite{LNi}]\label{Monotonicity}
	Let $(M,g(t))$ be a Ricci flow with bounded curvature at each time slice. Assume Hamilton's trace Harnack is nonnegative, then the quantity
	\begin{eqnarray*}
	\tilde{\theta}_+(t)=\frac{1}{(4\pi t)^{\frac{n}{2}}}\int_M e^{-\ell_+}d\mu_t
	\end{eqnarray*}
	is monotonically decreasing in $t$, where $\mu_t$ is the Riemannian measure of $g(t)$ and $\ell_+$ is the forward reduced distance based at some fixed point on $M\times\{0\}$.
\end{thm}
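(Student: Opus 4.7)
The plan is to differentiate $\tilde\theta_+(t)$ directly in $t$, apply the equations and inequalities for $\ell_+$ collected in Theorem \ref{variation2}, and reduce the computation to a manifestly nonnegative integral involving $K$. Setting $u(y,t)=(4\pi t)^{-n/2}e^{-\ell_+(y,t)}$ and using $\partial_t\,d\mu_t = -R\,d\mu_t$ under Ricci flow,
\[
\frac{d}{dt}\tilde\theta_+(t) \;=\; \int_M\bigl(\partial_t u - Ru\bigr)\,d\mu_t.
\]
Writing $\partial_t u = -\tfrac{n}{2t}u - u\,\partial_t\ell_+$ and eliminating $\partial_t\ell_+$ via (\ref{eq_l_1}) gives the pointwise (a.e.) identity
\[
\partial_t u - Ru \;=\; u\left(-\frac{n}{2t} - 2R + \frac{\ell_+}{t} + \frac{K}{2t^{3/2}}\right).
\]

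The next step is to replace the $\ell_+/t$ contribution using the Laplacian inequality (\ref{eq_l_3}). Since $\nabla u = -u\nabla\ell_+$, integration by parts (legitimate because standard bounded-curvature estimates for $\ell_+$ yield Gaussian-type decay of $u$ at spatial infinity) gives
\[
\int_M u|\nabla\ell_+|^2\,d\mu_t \;=\; \int_M u\,\Delta\ell_+\,d\mu_t.
\]
Using (\ref{eq_l_2}) on the left and the upper bound (\ref{eq_l_3}) on the right yields
\[
\int_M u\cdot\frac{\ell_+}{t}\,d\mu_t \;\leq\; \int_M u\left(2R + \frac{n}{2t} - \frac{3K}{2t^{3/2}}\right)d\mu_t.
\]
Substituting this into the integrated version of the earlier identity, the $R$ and $n/(2t)$ contributions cancel exactly, leaving
\[
\frac{d}{dt}\tilde\theta_+(t) \;\leq\; -\int_M u\,\frac{K}{t^{3/2}}\,d\mu_t.
\]
Since Hamilton's trace Harnack is nonnegative by hypothesis, $H(X)\geq 0$ along any minimizing $\mathcal{L}_+$-geodesic, hence $K=\int_0^t\eta^{3/2}H(X)\,d\eta\geq 0$ almost everywhere, and the desired monotonicity follows.

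The main obstacle is the rigorous justification of these manipulations in the noncompact, low-regularity setting. The function $\ell_+$ is only locally Lipschitz, so (\ref{eq_l_3}) has to be interpreted in the barrier or distributional sense, and the integration-by-parts identity above must be derived by a standard approximation (for instance, by replacing $\ell_+$ with an upper barrier built from nearby $\mathcal{L}_+$-geodesics in the style of Perelman, or by smoothing $\ell_+$ and passing to a limit). One also has to verify that differentiation under the integral is permissible and that the boundary contributions at spatial infinity actually vanish; both reduce to quantitative decay and gradient estimates for $u$, which follow from the bounded-curvature assumption together with the lower bounds on $\ell_+$ and the a priori control on $|\nabla\ell_+|$ coming from the $\mathcal{L}_+$-geodesic equation.
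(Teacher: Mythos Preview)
Your argument is correct and is essentially the same as the paper's sketch: the paper first packages (\ref{eq_l_1}), (\ref{eq_l_2}), and (\ref{eq_l_3}) into the single distributional inequality $\partial_t\ell_+-\Delta\ell_++|\nabla\ell_+|^2+R+\tfrac{n}{2t}\geq K/t^{3/2}$ and then integrates against $(4\pi t)^{-n/2}e^{-\ell_+}$ using the same integration-by-parts identity $\int u\,\Delta\ell_+=\int u\,|\nabla\ell_+|^2$ that you use, arriving at the identical bound $\tfrac{d}{dt}\tilde\theta_+\leq -\int u\,K/t^{3/2}\,d\mu_t\leq 0$. The only difference is the order in which the substitutions are performed, and both treatments defer the rigorous justification of the integration by parts and differentiation under the integral to the bounded-curvature estimates established elsewhere.
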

\begin{proof}[Sketch of proof]
Combing (\ref{eq_l_1}), (\ref{eq_l_2}), and (\ref{eq_l_3}), we have
\begin{eqnarray}\label{distribution}
\frac{\partial \ell_+}{\partial t}-\Delta \ell_+ +|\nabla
	\ell_+|^2+R+\frac{n}{2t}\geq \frac{K}{t^{\frac{3}{2}}}\geq 0
\end{eqnarray}
in the barrier sense or in the sense of distribution. Then, taking for granted the integration by parts at infinity, we compute
\begin{align*}
	&\frac{d}{d t}\int_{M} (4\pi t)^{-\frac{n}{2}}e^{-\ell_+(x,t)}d\mu_t\\
	=&-\int_{M}\left(
	\frac{\partial \ell_+}{\partial t}-\Delta \ell_+ +|\nabla
	\ell_+|^2+R+\frac{n}{2t}\right)(4\pi t)^{-\frac{n}{2}}e^{-\ell_+(x,t)}d\mu_t\\
	=&-\int_{M}\frac{K}{t^{\frac{3}{2}}} (4\pi t)^{-\frac{n}{2}}e^{-\ell_+(x,t)}d\mu_t\le 0.
	\end{align*}
	Note that the above computation is valid in our case according to the estimates in the next section.
	\end{proof}

\section{Estimates for $\ell_+$ function on Type III Ricci flows}

In this section, 
we use similar
methods as in \cite{N} to derive some estimates for the forward reduced
distance on a Type III immortal Ricci flow. Since Theorem \ref{TypeIII_Estimate}(2)---(4) are already well established in \cite{N}, we shall be relatively brief in their proofs. Recall that an immortal solution $(M,g(t))_{t\in[0,\infty)}$ is called Type III, if there exists a constant $C_0>0$, such that
\begin{eqnarray}\label{T_III}
|Rm|\leq\frac{C_0}{t} &\text{ everywhere on } &M\times(0,\infty).
\end{eqnarray}

\begin{thm}\label{TypeIII_Estimate}
	Let $(M^n,g(t))_{t\in[0,\infty)}$ be an $n$-dimensional Type III  immortal Ricci flow with nonnegative Ricci curvature everywhere. 	Let $ \ell_+$ be the forward reduced distance function based a fixed point $(x,0)$. Furthermore, assume that there exists a sequence of space-time points $\{(x_j,t_j)\}_{j=1}^\infty$ such that $t_j\nearrow\infty$ and
	\begin{equation}\label{key_assumption}
	\ell_+(x_j,t_j)\le A<\infty \ \text{ for all }\ j\geq 1.
	\end{equation}
Then, there exists a positive constant $Q$
	depending  only on $A$, $\alpha\in(0,1)$, $n$, and the Type III constant $C_0$ in (\ref{T_III}), such that the following inequalities hold for all
	$(y,t)\in M^n\times [\alpha^{-1},\alpha]$ and for all $j\geq 1$, understood in the barrier sense if any differentiation is involved.
	\begin{enumerate}
	\item  $\displaystyle |K^j|(y,t)\leq Q\sqrt{t}\left(1+\frac{d_{g_j(t)}(x_j,y)}{\sqrt{t}}\right)^2$, \ \ $\displaystyle |\nabla K^j|(y,t)\leq Q\left(1+\frac{d_{g_j(t)}(x_j,y)}{\sqrt{t}}\right)^2$, \\$\displaystyle \left|\frac{\partial}{\partial t} K^j\right|(y,t)\leq \frac{Q}{\sqrt{t}}\left(1+\frac{d_{g_j(t)}(x_j,y)}{\sqrt{t}}\right)^2$, 
		\item $\displaystyle\frac{1}{Q}\left(1+\frac{d_{g_j(t)}(x_j,y)}{\sqrt{t}}\right)^2-Q\leq \ell^j_+(y,t)\leq
		Q\left(1+\frac{d_{g_j(t)}(x_j,y)}{\sqrt{t}}\right)^2$,
		\item $\displaystyle|\nabla \ell^j_+|(y,t)\leq
		\frac{Q}{\sqrt{t}}\left(1+\frac{d_{g_j(t)}(x_j,y)}{\sqrt{t}}\right)$ ,
		\item $\displaystyle\left|\frac{\partial \ell^j_+}{\partial t}\right|(y,t)\leq
		\frac{Q}{t}\left(1+\frac{d_{g_j(t)}(x_j,y)}{\sqrt{t}}\right)^2$,
	\end{enumerate}
	where $g_j(t):=t_j^{-1}g(tt_j)$ is the Ricci flow sequence obtained by Type III scaling, $K^j$ is as defined in (\ref{K}) for the scaled Ricci flow $g_j$, and $\ell_+^j(\cdot,t):=\ell_+(\cdot,tt_j)$ is the forward reduced distance based at $(x,0)$ and with respect to the Ricci flow $g_j(t)$.
\end{thm}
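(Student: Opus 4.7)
The plan is to work throughout with the scaled flows $g_j(t) = t_j^{-1} g(t t_j)$, for which the Type III bound reads $|\mathrm{Rm}|_{g_j}(t) \le C_0/t$ on $M\times(0,\infty)$ uniformly in $j$, and the hypothesis (\ref{key_assumption}) becomes $\ell_+^j(x_j, 1) \le A$. The four estimates are to be uniform on the fixed bounded time-window specified in the statement. Since items (2)--(4) are the direct forward analogues of Naber's \cite{N} estimates for Perelman's $\ell$, I would largely follow his outline, noting only those places where the forward sign conventions in Theorem \ref{variation2} alter the computation. The technical novelty is item (1).

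For the upper bound in (2), I would concatenate two curves: a near-minimizer $\bar\gamma:[0,1]\to M$ for $\ell_+^j(x_j, 1)$, whose $\mathcal{L}_+$-length is bounded in terms of $A$, and a segment from $x_j$ to $y$ on $[1, t]$ taken to be a $g_j(1)$-geodesic reparameterized in $\eta$. The $\int_1^t \sqrt{\eta}\, R\, d\eta$ contribution is harmless by the Type III bound on $R$, while the kinetic piece contributes of order $d_{g_j(1)}(x_j,y)^2/\sqrt{t}$; the switch from $d_{g_j(1)}$ to $d_{g_j(t)}$ introduces only multiplicative constants by the standard distance-distortion estimate under bounded curvature. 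For the lower bound, $\mathrm{Rc}\ge 0$ gives $R\ge 0$, and a Cauchy--Schwarz applied to the $|\gamma'|^2$ term in (\ref{l_+length}) yields the desired quadratic growth in $d_{g_j(t)}(x, y)$; the shift of basepoint from $x$ to $x_j$ is absorbed using $d_{g_j(t)}(x, x_j) \le Q$, itself an upshot of $\ell_+^j(x_j,1)\le A$ via the same Cauchy--Schwarz. Once (1) and (2) are in hand, items (3) and (4) follow at once from the identities (\ref{eq_l_1}) and (\ref{eq_l_2}) together with the Type III bound on $R$.

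The bulk of the work is item (1). My strategy is to first derive a pointwise velocity estimate
\begin{equation*}
|X(\eta)|^2 \le \frac{Q'}{\eta}\left(1 + \frac{d_{g_j(t)}(x_j,y)}{\sqrt{t}}\right)^2
\end{equation*}
along the minimizing $\mathcal{L}_+$-geodesic reaching $(y,t)$, using the $\mathcal{L}_+$-geodesic equation under the reparameterization $\sigma = \sqrt{\eta}$ and the Type III bound. Then I would bound each piece of Hamilton's trace Harnack $H(X) = \partial_t R + 2\langle\nabla R, X\rangle + 2\mathrm{Rc}(X,X) + R/\eta$ by Shi's derivative estimates ($|\partial_t R|,\ |\nabla R|^2 \lesssim \eta^{-3}$, and so on) combined with the velocity bound, substitute into $K^j = \int_0^t \eta^{3/2} H(X)\, d\eta$, and integrate. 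For $\nabla K^j$ and $\partial_t K^j$, I would use the first-variation formulas for $K^j$ along the minimizing geodesic, expressing these derivatives as integrals of $\nabla H$ and $\partial_t H$ against a Jacobi field; one further application of Shi's estimates together with a Jacobi-field bound closes the argument.

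The main obstacle is the pointwise velocity estimate itself, since the Type III curvature is singular at $\eta = 0$ and the $\mathcal{L}_+$-geodesic equation is not manifestly dissipative in the forward direction. I expect the remedy is a forward analogue of Naber's monotonicity identity for $\sqrt{\eta}\,|X|^2$: differentiating this quantity along the geodesic produces terms that, under $|\mathrm{Rm}| \le C_0/\eta$, can be absorbed into $\sqrt{\eta}\,|X|^2$ itself and Gronwall'd back from the endpoint, whose value $\sqrt{t}\,|X(t)|^2$ is in turn controlled by $\ell_+^j(y,t)$ via (2) and the integral definition of $\ell_+$.
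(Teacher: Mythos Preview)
Your plan for items (2)--(4) is reasonable and would work, though it differs from the paper's route. The genuine gap is in item (1): you make the problem harder than it is, and the detour you propose does not obviously close.

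The key observation you are missing is that a pointwise bound on $|X(\eta)|^2$ is \emph{never needed}. In the expression $K=\int_0^t\eta^{3/2}H(X)\,d\eta$, the Type III and Shi bounds reduce the integrand to
\[
\eta^{3/2}\Big(\tfrac{C}{\eta^2}+\tfrac{C}{\eta^{3/2}}|X|+\tfrac{C}{\eta}|X|^2\Big),
\]
and the worst piece is $C\sqrt{\eta}\,|X|^2$. But since $R\ge 0$ here, $\int_0^t\sqrt{\eta}\,|X|^2\,d\eta\le\int_0^t\sqrt{\eta}(R+|X|^2)\,d\eta=2\sqrt{t}\,\ell_+(y,t)$, so $|K|\le C\sqrt{t}(1+\ell_+)$ falls out immediately. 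The same trick handles $|\nabla K|$ (via a normalized Jacobi field $Y$ with $|Y|^2=\eta/t$) and $|\partial_t K|$; in every case the $|X|^2$ contribution collapses to the $\ell_+$ integral. The paper then feeds $|K|\le C\sqrt{t}(1+\ell_+)$ back into (\ref{eq_l_1})--(\ref{eq_l_2}) to get $|\partial_t\ell_+^j|,\ |\nabla\ell_+^j|^2\le C(1+\ell_+^j)/t$, and integrates these first in $t$ from $\ell_+^j(x_j,1)\le A$, then in space, to obtain the upper bound in (2)---cleaner than splicing test curves.

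By contrast, your Gronwall argument for $\eta|X|^2$ will not deliver the sharp $|X(\eta)|^2\lesssim(1+\ell_+)/\eta$ you claim. From the $\mathcal{L}_+$-geodesic equation one gets $\big|\frac{d}{d\eta}(\eta|X|^2)\big|\le C\eta^{-1}(1+\eta|X|^2)$, and Gronwall over $[\eta,\eta_0]$ only yields $\eta|X|^2\lesssim(1+\ell_+)(\eta_0/\eta)^{C}$; the factor $(\eta_0/\eta)^C$ is fatal near $\eta=0$. The integral control $\int_0^t\sqrt{\eta}|X|^2\le 2\sqrt{t}\ell_+$ does not rescue this, since a mean-value choice of $\eta_0$ at scale $\eta^*$ gives $\eta_0|X(\eta_0)|^2\lesssim\sqrt{t}\,\ell_+/\sqrt{\eta^*}$, which again blows up. So the ``main obstacle'' you flagged is real for your route---and is simply absent from the paper's.

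Your lower bound for (2) via Cauchy--Schwarz (using $R\ge 0$, the metric monotonicity under $\mathrm{Rc}\ge 0$, and the basepoint shift $d_{g_j(t)}(x,x_j)\le Q$) is fine and arguably simpler than the paper's $f(s)=d_{g_j(s)}(\gamma_1(s),\gamma_2(s))$ argument.
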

\begin{proof}
	In this proof, we will use the capital letter $C$ to denote a general estimation constant, which depends on $\alpha$, $n$, $A$, and $C_0$ as indicated in the statement of this theorem, and could vary from line to line. Since the Ricci flow is Type III, by Shi's
	gradient estimates  
	we have 
	\begin{eqnarray*}|R|(y,t)\le \frac{C}{t}, \ \ \ |\nabla \cur|(y,t)\leq
	\frac{C}{t^{\frac{3}{2}}}, \ \ \ \left|\frac{\partial R}{\partial t}\right|(y,t)\leq \frac{C}{t^2},  \ \ \ \left|\nabla\frac{\partial}{\partial t}R\right|\leq\frac{C}{t^{\frac{5}{2}}},
		\end{eqnarray*}
for all $(y,t)\in M\times (0,\infty)$. Let $(y,t)\in M\times (0,\infty)$ be such that the minimizing $\mathcal{L}_+$-geodesic $\gamma$ connecting $(x,0)$ and $(y,t)$ is unique. We denote $X:=\gamma'$ and calculate that
	\begin{align}\label{ineq_K}
	|K|
	&=\left|\int^t_0 \eta^{\frac{3}{2}}\left(\frac{\partial
		R}{\partial \eta}+\frac{R}{\eta}+2\langle\nabla R,X\rangle+2Rc(X,X)\right)d\eta\right| \leq \int^t_0 \eta^{\frac{3}{2}}\left(\frac{C}{\eta^2}+\frac{C}{\eta^{\frac{3}{2}}}|X|+\frac{C}{\eta}|X|^2\right)d\eta\nonumber\\
	&\leq \int_0^t\frac{C}{\eta^{\frac{1}{2}}}d\eta+\int_0^t\sqrt{\eta}|X|^2d\eta\leq 2C\sqrt{t}+C\int_0^t\sqrt{\eta}(|X|^2+R)d\eta=2C\sqrt{t}(1+\ell_+(y,t)).
	\end{align}

	Since the Type III scaling process does not alter the Type III constant in (\ref{T_III}), we obtain from (\ref{eq_l_1}), (\ref{eq_l_2}),  and (\ref{ineq_K}) that
	\begin{align}\label{ineq_partial_t}
	\left|\frac{\partial{\ell^j_+}}{\partial t}\right|(y,t)&\leq \frac{C}{t}(1+\ell^j_+(y,t)),\\
	|\nabla \ell^j_+|^2(y,t)&\leq \frac{C}{t}(1+\ell^j_+(y,t)).\label{ineq_gradient}
	\end{align}
In view of the fact
	\begin{align*}
	\ell^j_+(x_j,1)=\ell_+(x_j,t_j)\le A,
	\end{align*} 
we may integrate (\ref{ineq_partial_t}) to obtain
	\begin{align}\label{ineq_l_up}
	\ell^j_+(x_j,t)\le C \ \ \text{ for all }   \ \ t\in [\alpha^{-1},\alpha].
	\end{align} 
	Integrating (\ref{ineq_gradient}) in space and applying (\ref{ineq_l_up}), we have
	\begin{align}\label{ineq_l_up_b}
	\ell^j_+(y,t) \leq Q\left(1+\frac{d_{g_j(t)}(x_j,y)}{\sqrt{t}}\right)^2 \ \ \text{ for all }\ \ (y,t)\in M\times[\alpha,\alpha^{-1}].
	\end{align}
	Conclusion (3), (4), the second inequality of conclusion (2), and the first inequality of conclusion (1) now follow from (\ref{ineq_K}), (\ref{ineq_partial_t}), (\ref{ineq_gradient}), and (\ref{ineq_l_up_b}).

	To obtain the first inequality of conclusion (2), we fix $(y,t)\in M\times[\alpha,\alpha^{-1}]$ and let $\gamma_1(s)$ and $\gamma_2(s)$ be minimizing $\mathcal{L}_+$-geodesics from $(x,0)$ to $(x^j,t)$ and to $(y,t)$, respectively, all with respect to $g_j(t)$. We denote $f(s):=d_{g_j(s)}(\gamma_1(s),\gamma_2(s))$. Then we have
	\begin{align*}
	\frac{d^-}{d
		s}f(s)
	&=\langle\nabla d_{g_j(s)},\gamma_1'(s)\rangle+\langle\nabla
	d_{g_j(s)},\gamma_2'(s)\rangle+\left(\frac{d^-}{d
		\tau}d_{g_j(\tau)}(\gamma_1(s),\gamma_2(s))\right)\Bigg|_{\tau=s}\\
	&=\langle\nabla d_{g_j(s)},\nabla \ell^j_+(\gamma_1(s),s)\rangle+\langle\nabla
	d_{g_j(s)},\nabla \ell^j_+(\gamma_2(s),s)\rangle +\left(\frac{d^-}{d
		\tau}d_{g_j(\tau)}(\gamma_1(s),\gamma_2(s))\right)\Bigg|_{\tau=s}\\
	&\leq |\nabla \ell^j_+(\gamma_1(s),s)|+|\nabla \ell^j_+(\gamma_2(s),s)|+	\left(\frac{d^-}{d
		\tau}\int_{\sigma}\sqrt{g_j(\tau)(\sigma',\sigma')}\right)\Bigg|_{\tau=s},
	\end{align*}
	where $\sigma$ is a unit speed minimizing geodesic from $\gamma_1(s)$
	to $\gamma_2(s)$ with respect to metric $g_j(s)$, 
	and
	$\frac{d^{-} f}{d s} = \liminf\limits _{h \rightarrow 0^{+}} \frac{f(s+h)-f(s)}{h}$ is the lower forward Dini derivative.
	By Lemma 18.1 in \cite{CCGGI}, we have 
	\begin{align*}
	\left(\frac{d^-}{d
		\tau}\int_{\sigma}\sqrt{g_j(\tau)(\sigma',\sigma')}\right)\Bigg|_{\tau=s}=-\min\limits_{\eta\in\mathcal{Z}\left(s\right)}\int_{\eta}Rc_{g_j(s)}(\eta',\eta')\leq 0,
	\end{align*}
	where $\mathcal{Z}\left(s\right)$ denotes the set of all unit speed minimizing geodesics
	from $\gamma_1(s)$
	to $\gamma_2(s)$ with respect to metric $g_j(s)$.
	Then, by
	(\ref{ineq_gradient}), we get
	\begin{align}\label{ineq_2.4_1}
	\frac{d^-}{ds}	f(s)\leq \sqrt{\frac{C_2}{s}(1+\ell^j_+(\gamma_1(s),s))}+\sqrt{\frac{C_2}{s}(1+\ell^j_+(\gamma_2(s),s))}.
	\end{align}
On the other hand, since for all $s\in(0, t]$, we have
	\begin{align*}
		\ell^j_+(\gamma_2(s),s)&=\frac{1}{2\sqrt{s}}\int^s_0\sqrt{\eta}(R+|X|^2) d\eta
	\leq \frac{1}{2\sqrt{s}}\int^t_0\sqrt{\eta}(R+|X|^2)
		d\eta=\frac{\sqrt{t}}{\sqrt{s}}
	\ell^j_+(y,t),
	\\
	\ell^j_+(\gamma_1(s),s)&\leq\frac{\sqrt{t}}{\sqrt{s}}\ell^j_+(x_j,t)\leq C\frac{\sqrt{t}}{\sqrt{s}},
	\end{align*}
where in the latter formula we have applied (\ref{ineq_l_up}). Then, (\ref{ineq_2.4_1}) becomes
	\begin{align*}
	\frac{d^-}{d
		s}	f(s)\leq
	C\frac{t^{\frac{1}{4}}}{s^{\frac{3}{4}}}\left(1+\sqrt{1+l^j_+(y,t)}\right)\ \ \text{ for all }\ \ s\in(0,t].
	\end{align*}
Integrating the above inequality from $0$ to $t$, we obtain the first inequality of conclusion (2).

	Finally, to obtain the last two inequalities of conclusion (1), we let  $\gamma$ be a minimizing $\mathcal{L}_+$-geodesic with respect to $g$, which connects $(x,0)$ and $(y,t)$, and $Y$ an $\mathcal{L}_+$-Jacobi field along $\gamma$, satisfying $[X,Y]=0$ and 
\begin{eqnarray}
\nabla_XY=\ric(Y)+\frac{1}{2\eta}Y,\ \ \ |Y(\eta)|^2=\frac{\eta}{t}|Y(t)|^2=\frac{\eta}{t},\ \ \ |\nabla_X Y|\leq \frac{C}{t^{\frac{1}{2}}\eta^{\frac{1}{2}}}.
\end{eqnarray}
Then, we may compute
\begin{align}\label{K_g}
|\delta_YK|&=\Bigg|\int_0^t\eta^{\frac{3}{2}}\bigg(\Big\langle\nabla\frac{\partial}{\partial\eta}R, Y\Big\rangle+\frac{1}{\eta}\langle\nabla R,Y\rangle+2\langle\nabla^2R,X\otimes Y\rangle+2\langle\nabla R,\nabla_XY\rangle\\\nonumber
&\quad +2\nabla\ric(Y,X,X)+2\ric(X,\nabla_XY)\bigg)d\eta\Bigg|\leq C\int_0^t\eta^{\frac{3}{2}}\left(\frac{1}{t^{\frac{1}{2}}\eta^2}+\frac{1}{t^{\frac{1}{2}}\eta^{\frac{3}{2}}}|X|+\frac{1}{t^{\frac{1}{2}}\eta}|X|^2\right)d\eta
\\\nonumber
&\leq \frac{C}{\sqrt{t}}\int_0^t\frac{1}{\eta^{\frac{1}{2}}}d\eta+\frac{C}{\sqrt{t}}\int_0^t\sqrt{\eta}(|X|^2+R)d\eta\leq C(1+\ell_+(y,t)).
\end{align}
Since the Type III constant in (\ref{T_III}) is not affected by the Type III scaling, we obtain the second inequality of conclusion (1) by (\ref{K_g}). Next, we observe that
\begin{align*}
\left|\frac{\partial}{\partial t}K\right|(y,t)&=\left|\frac{d}{d\eta}\Big|_{\eta=t}K-\langle\nabla K, X\rangle(y,t)\right|\leq t^{\frac{3}{2}}\left|\frac{\partial
		R}{\partial t}+\frac{R}{t}+2\langle\nabla R,X\rangle+2Rc(X,X)\right|+|\nabla K||X|
		\\\nonumber
		&\leq\frac{C}{\sqrt{t}}+C|X|+C\sqrt{t}|X|^2+C(1+\ell_+(y,t))|X|\leq \frac{C}{\sqrt{t}}(1+\ell_+(y,t))+C\sqrt{t}|\nabla \ell_+|^2(y,t)
		\\\nonumber
		&\leq\frac{C}{\sqrt{t}}(1+\ell_+(y,t)).
\end{align*}
Here we have used $X(t)=\nabla\ell_+(y,t)$ and formula (\ref{ineq_gradient}). Again, by the scaling invariance of the Type III constant, we obtain the third inequality of conclusion (1).

\end{proof}

\section{the proof of the main theorem}

In this section, we prove the main theorem by implementing the method of, \cite{CZhang}, \cite{LZ}, and \cite{Zh}. Since our techniques and arguments are very similar to that of \cite{CZhang}, we are not including obvious modifications, and the readers are referred to this paper for more detailed treatment.

Let $(M,g_0(t))$ be an expanding breather as described in the statement of Theorem \ref{main}. Note that the assumptions therein guarantees the validity of Hamilton's Harnack estimate (\cite{brendle}, \cite{cao} and \cite{RH2}). After rescaling and translating in time, we may assume that there exists $\alpha\in(0,1)$ and a diffeomorphism $\phi:M\to M$, such that
\begin{equation}\label{breather}
\alpha g_0(1)=\phi^*g_0(0).
\end{equation}

Similar to \cite{CZhang}, for each $j\geq 0$ we define 
\begin{eqnarray*}
\displaystyle t_j&=&\sum^j_{k=0}\alpha^{-k},\ \ t_0=1,
\\
g_j(t)&=&\alpha^{-j}(\phi^j)^*g_0(\alpha^{j}( t- t_{j-1})),\ \  t\in [ t_{j-1}, t_j].
\end{eqnarray*}
Obviously, $t_j\nearrow\infty$. We may then define the spliced immortal solution 
\begin{eqnarray}\label{defined_ancient_solution}
g( t)=\left\{
\begin{array}{ll}
g_0( t), \quad &  t\in [0,1], \\\nonumber
g_j( t), \quad & t\in [ t_{j-1}, t_j].
\end{array}
\right.
\end{eqnarray}
It is straightforward to check that 
$
g_j(t_{j-1})=g_{j-1}(t_{j-1})
$
and
$
|Rm_{g_{j}( t)}|
\le \frac{C}{ t}
$
for all $j\ge 1$, where the constant $C$ depends only on $\alpha$ and the curvature bound of the original breather.
 Then the immortal solution $g(t)$ is of Type III and is smooth by the uniqueness of the Ricci flow (c.f. \cite{uniqueness1} and \cite{uniqueness2}).

Next, we fix a point $p_0\in M$ and define $x_j=\phi^{-{(j+1)}}(p_0)$ for $j\ge 0$. Let $\ell_+$ be the reduced distance from $(p_0,0)$, and we shall proceed to show
\begin{eqnarray}\label{nonsense}
\limsup_{j\rightarrow\infty}\ell_+(x_{j},t_{j})<\infty.
\end{eqnarray} 
Let $\sigma:[0,1]\to M^n$ be a smooth curve satisfying $\sigma(0)=p_0$ and $\sigma(1)=x_0=\phi^{-1}(p_0)$.
We define $\sigma_j:[t_j,t_{j+1}]\rightarrow M$ and $\gamma_j:[0, t_{j+1}]\to M$ as
\begin{align}
&\sigma_j( t)=\phi^{-{(j+1)}}\circ\sigma(\alpha^{j+1}( t- t_{j})),\quad  t\in
[ t_{j}, t_{j+1}],
\\
&\gamma_j( t)=\left\{ \label{gamma}
\begin{array}{ll}
\sigma( t), \quad &  t\in [0,1], \\
\sigma_i( t), \quad & t\in [ t_{i}, t_{i+1}],0\le i\le j.
\end{array}
\right.
\end{align}
Since
\begin{equation*}
\sigma_j( t_{j})=\phi^{-(j+1)}\circ\sigma(0)=\phi^{-j}\circ\sigma(1)=\phi^{-j}\circ\sigma(\alpha^{j}( t_{j}- t_{j-1}))=\sigma_{j-1}( t_{j}),
\end{equation*}
we have that $\gamma_j(t)$ defined as (\ref{gamma}) is a piecewise smooth
$C^0$ curve satisfying $\gamma_j(0)=p_0$, $\gamma_j( t_j)=x_{j}$. 
We then compute
\begin{align*}
2\sqrt{ t_{j+1}}\ell_+(x_{j+1}, t_{j+1})&\le\mathcal{L}_+(\sigma)+\sum\limits_{i=1}^{j}
\int\limits_{ t_{i}}^{ t_{i+1}}\sqrt{ t}\left(R(\sigma_i( t), t)+|\sigma_i'( t)|^2_{g( t)}\right)d t\\
&\le D+\sum\limits_{i=1}^{j}
\int\limits_{ t_{i}}^{ t_{i+1}}\sqrt{ t}\left(\frac{C}{ t}+A\alpha^{i+1}\right)d t\le D+C\sum\limits_{i=1}^{j}
\alpha^{-\frac{i+1}{2}}
\\
&\le D+ C\alpha^{-\frac{j+1}{2}},
\end{align*}
for all $j\geq 0$, where  $A:=\max\limits_{ t\in [0,1]}|\sigma'( t)|_{g_0( t)}$, $C$, and $D$ are all constants independent of $j$. Since  $t_{j+1}\ge \alpha^{-(j+1)}$, we obtain (\ref{nonsense}) from the above computation. Theorem \ref{TypeIII_Estimate} is now applicable to $(M,g(t))_{t\in[0,\infty)}$ along the space-time sequence $\{(x_j,t_j)\}_{j=1}^\infty$.

From the construction of $g(t)$, we easily observe that
$$
\Big(M, t_j^{-1}g( t_j t),x_j\Big)_{t\in \left[1,\frac{ t_{j+1}}{ t_{j}}\right]}\rightarrow \Big(M,g_\infty(t),p_0\Big)_{t\in[1,\alpha^{-1}]},
$$
where $g_\infty$ and $g_0$ differ only by a scaling and a time-shifting. Furthermore, by Theorem \ref{TypeIII_Estimate}, we have that there exists a function $\ell_+^\infty: M\times[1,\alpha^{-1}]\rightarrow \mathbb{R}$, such that
\begin{eqnarray*}
\ell_+^j\rightarrow\ell_+^\infty
\end{eqnarray*}
in the $C^{0,\alpha}_{\operatorname{loc}}$ sense and the weak $*W^{1,2}_{\operatorname{loc}}$ sense, where $\ell_+^j(\cdot,t)=\ell_+(\cdot,tt_j)$. Our next goal is to show that $\ell_+^\infty$ gives rise to an expander structure on $(M,g_\infty(t))$.

Arguing as in section 4 of \cite{N} or section 6 of \cite{CZhang} and 
applying the monotonicity in theorem \ref{Monotonicity}  in the same way as one have applied  Perelman's reduced geometry, we have that $\ell_+^\infty: M\times[1,\alpha^{-1}]\rightarrow \mathbb{R}$ is a smooth function, satisfying
\begin{eqnarray}\label{linfty}
\frac{\partial \ell_+^\infty}{\partial t}-\Delta \ell_+^\infty +|\nabla
	\ell_+^\infty|^2+R_\infty+\frac{n}{2t}=0.
\end{eqnarray}

Furthermore, by Theorem \ref{TypeIII_Estimate}(1), we may find a function $K^\infty: M\times[1,\alpha^{-1}]\rightarrow\mathbb{R}$, such that $K^j\rightarrow K^\infty$ in the $C^{0,\alpha}_{\operatorname{loc}}$ sense and the weak $*W^{1,2}_{\operatorname{loc}}$ sense. Fixing arbitrary $1<s_1<s_2<\alpha^{-1}$ and an arbitrary nonnegative and smooth time-independent cut-off function $\varphi$ compactly supported on $M$, we obtain the following by (\ref{distribution})
\begin{align*}
0&\leq\int_{s_1}^{s_2}\int_M\frac{1}{t^{\frac{3}{2}}}K^j\varphi (4\pi t)^{-\frac{n}{2}}e^{-\ell_+^j}d\mu_t^jdt
\\
&\leq\int_{s_1}^{s_2}\int_M\left(\varphi\frac{\partial}{\partial t}\ell_+^j-\langle\nabla\varphi,\nabla\ell_+^j\rangle+R_j\varphi+\frac{n}{2t}\varphi\right) (4\pi t)^{-\frac{n}{2}}e^{-\ell_+^j}d\mu_t^jdt
\\
&\rightarrow\int_{s_1}^{s_2}\int_M\left(\varphi\frac{\partial}{\partial t}\ell_+^\infty-\langle\nabla\varphi,\nabla\ell_+^\infty\rangle+R_\infty\varphi+\frac{n}{2t}\varphi\right) (4\pi t)^{-\frac{n}{2}}e^{-\ell_+^\infty}d\mu_t^\infty dt
\\
&=0.
\end{align*}
This shows that
\begin{eqnarray}
K^\infty\equiv 0 &\text{ everywhere on }& M\times[1,\alpha^{-1}].
\end{eqnarray}
Since equations (\ref{eq_l_1}) and (\ref{eq_l_2}) are both carried to the limit in the sense of distribution, and since $\ell_+^\infty$ is smooth, the following hold on $M\times[1,\alpha^{-1}]$
\begin{eqnarray*}
\frac{\partial}{\partial t}\ell_+^\infty=R_\infty-\frac{\ell_+^\infty}{t},\ \ \ |\nabla \ell_+^\infty|^2=\frac{\ell_+^\infty}{t}-R_\infty.
\end{eqnarray*}
In combination with (\ref{linfty}), we then have
\begin{align*}
\frac{\partial \ell_+^{\infty}}{\partial t}+\Delta \ell_+^{\infty} +|\nabla
\ell_+^{\infty}|^2-R_{{\infty}}-\frac{n}{2t}=0,
\\	
2\Delta \ell_+^{\infty} +|\nabla \ell_+^{\infty}|^2-R_{{\infty}}-\frac{\ell_+^{\infty}+n}{t}=0.
\end{align*}
Then, by Theorem 1.2 in \cite{FIN},
\begin{align*}
&\left(\frac{\partial}{\partial t}+\Delta-R_{\infty}\right) \left(t(2\Delta \ell_+^{\infty} +|\nabla \ell_+^{\infty}|^2-R_{\infty})-\ell_+^{\infty}-n\right)(4\pi t)^{-\frac{n}{2}}e^{\ell_+^{\infty}}\\
=&-2t \left|Rc_{\infty}-\nabla \nabla \ell_+^{\infty}+\frac{g_{\infty} }{2t}\right|^{2}(4\pi t)^{-\frac{n}{2}}e^{\ell_+^{\infty}}=0.
\end{align*}
Hence $\ell_+^{\infty}$ satisfies the following gradient expanding soliton equation
\begin{align*}
Rc_{{\infty}}-\nabla \nabla \ell_+^{\infty}= -\frac{1}{2t }g_{\infty}.
\end{align*}
Note that Theorem \ref{TypeIII_Estimate}(2) guarantees that $\displaystyle (4\pi t)^{-\frac{n}{2}}e^{\ell_+^{\infty}}>0$ everywhere. This finishes the proof.


\begin{thebibliography}{99}
	\bibitem{brendle}
	
	Simon Brendle. \emph{A generalization of Hamilton's differential Harnack inequality for the Ricci flow.} J. Differential Geom., 82(1):207-227, 2009
	
	\bibitem{cao}
	
	H.D.Cao. \emph{On Harnack's inequalities for the Kaehler-Ricci flow}, Invent. Math. 109 (1992), no. 2, 247-263
	
	
	\bibitem{uniqueness1}
	B.L.Chen, X.P. Zhu,  Uniqueness of the Ricci flow on complete noncompact manifolds. J. Differ.Geom. 74(1), 119–154 (2006)
	
	
	\bibitem{CZhang}
	Liang Cheng, Yongjia Zhang.\emph{ Perelman-type no breather theorem for noncompact Ricci flows.} Trans. Amer. math. Soc., to appear  
	
	
	
	\bibitem{CCGGI}
	B.Chow, S.C.Chu, D.Glickenstein, C. Guenther, J. Isenberg, T. Ivey, D.Knopf, P.Lu, F.Luo, and L.Ni, \emph{The Ricci flow: techniques and applications.} Parts I---IV. Mathematical Surveys and Monographs, American Mathematical Society, Providence, RI.
	
	\bibitem{FIN}
	M.Feldman, T.Ilmanen, L.Ni, \emph{Entropy and reduced distance for Ricci expanders}, J. Geom. Anal. 15 (2005), 49-62.
	
	\bibitem{uniqueness2}
	 Kotschwar, B. \emph{Backwards uniqueness for the ricci flow.} Int. Math. Res. Not. 2010(21), 4064–4097
	
	\bibitem{L}
	J.Lott. \emph{On the long-time behavior of type-III Ricci flow solutions.} Mathematische Annalen, 2007, 339(3):627-666.
	

	
	\bibitem{N}
	Naber. A. \emph{Noncompact Shrinking 4-Solitons with Nonnegative Curvature.} J.reine Angew.math, 2007, 645(2):125-153.
	
	\bibitem{LNi}Lei Ni,\emph{ Monotonicity and Li-Yau-Hamilton inequality}, Surveys in Differential Geometry, Vol 12 (2008), pages 251–302.
	
	\bibitem{RH2} R.Hamilton, \emph{The Harnack estimate for the Ricci flow}, Journal of Differential Geometry, 1993, 37 (1): 225–243.
	
	
	\bibitem{LZ}
	Peng Lu and Yu Zheng. \emph{New proofs of Perelman's theorem on shrinking breathers in Ricci flow.} The Journal of Geometric Analysis, pages 1-7, 2017.
	
	
	\bibitem{P1} G.Perelman, \emph{The entropy formula for the Ricci flow and its geometric applications.} http://arxiv.org/abs/math/0211159.
	
	\bibitem{RV} M. Rimoldi, G. Veronelli, \emph{Extremals of Log Sobolev inequality on non-compact manifolds and Ricci soliton structures.} Calc. Var. 58, 66 (2019). https://doi.org/10.1007/s00526-019-1518-z
		
	
	\bibitem{Zhang1} Qi S. Zhang, \emph{A no breathers theorem for some noncompact Ricci flows}. Asian Journal of Mathematics, 2012, 18(4).
	
	
	
	\bibitem{Zh}
	Yongjia, Zhang.\emph{ A Note on Perelman's No Shrinking Breather Theorem.} The Journal of Geometric Analysis (2018).
	
\end{thebibliography}
\end{document}